\theoremstyle{plain}
\newtheorem{thm}{Theorem}[section]
\newtheorem*{thm*}{Theorem}
\newtheorem{cor}[thm]{Corollary}
\newtheorem{lem}[thm]{Lemma}
\newtheorem{prop}[thm]{Proposition}
\newtheorem{rem}[thm]{Remark}
\newtheoremstyle{named}{}{}{\itshape}{}{\bfseries}{.}{.5em}{#1 \thmnote{#3}}
\theoremstyle{named}
\numberwithin{equation}{section}
\DeclareSymbolFont{rsfs}{U}{rsfs}{m}{n}
\DeclareSymbolFontAlphabet{\mathscr}{rsfs}
\def\VV{{\mathcal V}}
\def\C{{\mathbb C}}
\def\T{{\mathbb T}}
\def\P{{\mathcal P}}
\def\ra{\rightarrow}
\def\beq{\begin{eqnarray}}
\def\eeq{\end{eqnarray}}
\def\beqa{\begin{eqnarray*}}
\def\eeqa{\end{eqnarray*}}
\def\bl{\boldsymbol}
\def\ra{\rightarrow}
\begin{document}

\title[Approximation on Rational Curves] {Approximation in the mean on rational curves}

\author{Shibananda Biswas}
\address[S.~Biswas]{Department of Mathematics and Statistics, Indian Institute of Science Education and Research Kolkata, Mohanpur 741246, Nadia, West Bengal, India}
\email{\tt shibananda@iiserkol.ac.in}

\author{Mihai Putinar}
\address[M.~Putinar]{University of California at Santa Barbara, CA,
USA and Newcastle University, Newcastle upon Tyne, UK} 
\email{\tt mputinar@math.ucsb.edu, mihai.putinar@ncl.ac.uk}

\thanks{}
\dedicatory{J\"org Eschmeier, in memoriam}

\subjclass[2010]{41A10, 41A20, 47B20, 14H45} \keywords{Bounded point evaluation, rational approximation, subnormal operator, rational curve}
\date  {}

\begin{abstract} In the presence of a positive, compactly supported measure on an affine algebraic curve, we relate
the density of polynomials in Lebesgue $L^2$-space to the existence of analytic bounded point evaluations.
Analogues to the complex plane results of Thomson and Brennan are obtained on rational curves.

\end{abstract}

\maketitle
\section{Introduction}

Let $n$ be a positive integer and $\C[z]$ denote the algebra of polynomials in $n$ complex variables $z = (z_1, z_2, \ldots, z_n).$
Let $\VV \subset \C^n$ be a complex affine curve, that is, the common zero set of a finite system of polynomials. When necessary, we consider $\VV$ as an algebraic variety endowed with its reduced structural sheaf.
Given a positive Borel measure $\mu$ supported by a compact subset $K$ of $\VV$ we consider the closure $P^2(\mu)$ of $\C[z]$ in
$L^2(\mu)$. A central question of function theory is the relationship between the density of polynomials in Lebesgue space, that is $P^2(\mu) = L^2(\mu)$ versus the existence of {\it bounded point evaluations} $\lambda$ for $P^2(\mu)$:
$$ |p(\lambda)| \leq C \|p \|_{2,\mu}, \ \ \ p \in \C[z].$$
Note that, if such a point exists, then it belongs to $\VV$. If the above estimate holds, with the same constant, for an open set, we say that the measure $\mu$ admits {\it analytic bounded point evaluations}. By enlarging the notion of (analytic) bounded point evaluation beyond polynomials, such as to rational functions with prescribed pole location, one has to specify the algebra of analytic functions for which the above bound holds.

In the case of measures defined on the complex plane, this density problem is classical, naturally  related to Szeg\"o's Limit Theorem (on the circle), determinateness of the moment problem on the line, the structure of cyclic subnormal operators \cite{K,Akh,C}. We owe to Jim Thomson \cite{T} the definitive answer, encrypted in one definitive statement which culminates more than half a century of partial results: \bigskip

{\it For a positive Borel measure $\mu$, compactly supported on the complex plane and without point masses,
$P^2(\mu) \neq L^2(\mu)$ if and only if there exist analytic bounded point evaluations.}\bigskip

A different proof of Thomson's theorem appears in \cite{Brennan-2005}, together with authoritative historical comments. The cloud of such point evaluations gives essential information about the building blocks of subnormal operators \cite{C}. Several generalizations of Thomson's Theorem to the case of rational functions have been proposed, see the recent survey \cite{CY} for a detailed account, or \cite{Yang} for some recent advances. In the present note we relay on Brennan's rational approximation theorem, reported with an original proof in \cite{Brennan-2008}.

Our aim is to start investigating conditions assuring the validity of Thomson's Theorem on algebraic curves. 
\section{Algebraic Curves with Polynomial Parametrization}

To fix ideas we start with the simplest framework. In this section we assume that the algebraic curve $\VV$ admits a polynomial parametrization. This is a well studied subclass of rational curves, not 
neglected by its relevance to numerical algebraic geometry \cite{SWP}. To simplify terminology, we say that $\VV$ is a {\it polynomial curve}.

A key algebraic observation is that one can change the parametrization of $\VV$ to a proper polynomial parametrization \cite[Theorem 6.11]{SWP}, which in turn is normal \cite[Corollary 6.21]{SWP}. To be more specific, given a polynomial curve $\VV$, there exists a map 
$$P = (p_1,\ldots,p_n): \C\ra \VV, \ \ p_i\in\C[\zeta], 1\leq i\leq n,$$ 
with the property that $P$ is bijective away from a finite set of points and avoids at most finitely many points of $\VV$. The complex coordinate on the parameter space is $\zeta \in \C$.
That is the fibre $P^{-1}P(\zeta)$ has cardinality one, with the exception of finitely many points $ \zeta \in \C$, where it is finite.

We denote by $\mathcal O$ the sheaf of complex analytic functions.  Since the map $P$ is finite, Grauert's Theorem states that the direct image sheaf $P_\ast {\mathcal O_{\C}}$, defined by sheaf associated to the presheaf $P_\ast {\mathcal O_{\C}}(U) = \mathcal O(P^{-1}(U))$ for $U$ open in $\C^n$, is coherent as 
a ${{\mathcal O}_{{\C}^n}}$-module. We refer to \cite[Chapter I, Section 3]{GRtss} for the proof and terminology. Moreover, Theorem I.1.5 in \cite{GRtss} asserts:
\begin{eqnarray}\label{DI}
P_\ast {\mathcal O_{\C}}({\C}^n) = {\mathcal O}({\C}).
\end{eqnarray}
Consider the natural pull-back morphism
$$ \Psi: {\mathcal O}_{{\C}^n} \longrightarrow P_* {\mathcal O}_{\C}$$
defined by $h \mapsto P^* h = h \circ P.$

\begin{lem}\label{finite}
Let $P = (p_1,\ldots,p_n): \C\ra \C^n$, $p_i\in\C[z], 1\leq i\leq n,$ be an injective map away from a finite set of points.  Then
$$ \dim {\mathcal O}({\C})/ P^\ast {\mathcal O}({{\C}^n}) < \infty.$$
\end{lem}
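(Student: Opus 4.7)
The plan is to identify $\mathcal{O}(\C)/P^{\ast}\mathcal{O}(\C^n)$ with a subspace of the global sections of the cokernel sheaf of $\Psi$, and then show that this cokernel is a coherent sheaf with \emph{finite} support. The argument rests on two pillars already in play: the coherence of $P_{\ast}\mathcal{O}_{\C}$ (Grauert, applicable because $P$, being polynomial, is proper with finite fibres), and the vanishing of higher coherent cohomology on the Stein manifold $\C^n$ (Cartan's Theorem B).

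Concretely, I introduce the coherent $\mathcal{O}_{\C^n}$-modules
$$ \mathcal{K} := \ker\Psi, \qquad \mathcal{G} := \mathrm{im}\,\Psi, \qquad \mathcal{F} := \mathrm{coker}\,\Psi, $$
and split the four-term exact sequence $0 \ra \mathcal{K} \ra \mathcal{O}_{\C^n} \ra P_{\ast}\mathcal{O}_{\C} \ra \mathcal{F} \ra 0$ into the two short exact sequences
$$ 0 \ra \mathcal{K} \ra \mathcal{O}_{\C^n} \ra \mathcal{G} \ra 0 \quad \text{and} \quad 0 \ra \mathcal{G} \ra P_{\ast}\mathcal{O}_{\C} \ra \mathcal{F} \ra 0. $$
Cartan B yields $H^1(\C^n,\mathcal{K}) = 0$, so taking global sections of the first sequence shows that $H^0(\C^n,\mathcal{G})$ equals the image of $\mathcal{O}(\C^n)$ inside $(P_{\ast}\mathcal{O}_{\C})(\C^n)$; by \eqref{DI} and the definition of $\Psi$, this image is exactly $P^{\ast}\mathcal{O}(\C^n)$. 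Taking global sections of the second sequence then produces the embedding $\mathcal{O}(\C)/P^{\ast}\mathcal{O}(\C^n) \hookrightarrow H^0(\C^n,\mathcal{F})$.

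The crux is to verify that $\mathrm{supp}\,\mathcal{F}$ is finite. Off the Zariski closure $\mathcal{V}' := \overline{P(\C)}$ the stalks of $P_{\ast}\mathcal{O}_{\C}$ vanish, so $\mathcal{F}$ does too. At a point $x \in \mathcal{V}'$ which is smooth on $\mathcal{V}'$, lies in $P(\C)$, has a single preimage $P^{-1}(x) = \{\zeta_0\}$ and satisfies $P'(\zeta_0) \neq 0$, the map $P$ is a local biholomorphism from a disk around $\zeta_0$ onto a neighbourhood of $x$ in $\mathcal{V}'$, and every germ in $\mathcal{O}_{\C,\zeta_0}$ extends through this smooth submanifold to an ambient germ in $\mathcal{O}_{\C^n,x}$. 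So $\Psi_x$ is surjective there and $\mathcal{F}_x = 0$. By the hypothesis on $P$, the complementary set --- non-injectivity locus, critical values of $P$, singular points of $\mathcal{V}'$, and missing points $\mathcal{V}'\setminus P(\C)$ --- is finite, so $\mathrm{supp}\,\mathcal{F}$ is finite.

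Finally, $H^0(\C^n,\mathcal{F})$ is finite-dimensional over $\C$: a coherent sheaf with finite support decomposes as a direct sum of skyscraper sheaves, one for each support point. At each such $x$, the stalk $\mathcal{F}_x$ is a finitely generated $\mathcal{O}_{\C^n,x}$-module whose support is the single maximal ideal $\mathfrak{m}_x$; hence $\mathfrak{m}_x^N \mathcal{F}_x = 0$ for some $N$, $\mathcal{F}_x$ has finite length, and summing over the finite support yields a finite-dimensional space. The main obstacle I anticipate is the local surjectivity of $\Psi_x$ at generic points of $\mathcal{V}'$: it relies on the extension of holomorphic germs from a smooth analytic submanifold to the ambient space, which is classical but has to be invoked carefully, together with bookkeeping of the exact finite set of "bad" points.
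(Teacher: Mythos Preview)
Your proof is correct and follows essentially the same route as the paper: split the pull-back morphism $\Psi$ into kernel/image/cokernel, use Cartan~B on the Stein space $\C^n$ to identify $H^0(\mathcal{G})$ with $P^\ast\mathcal{O}(\C^n)$, and conclude by showing the cokernel sheaf is coherent with finite support. The only cosmetic differences are that the paper cites \cite[I.1.5]{GRtss} directly for the vanishing of the cokernel at points with a single preimage (where you argue it by hand via local biholomorphism onto a smooth germ), and the paper pushes a bit further to get $H^1(\C^n,\mathcal{G})=0$ and hence an isomorphism onto $H^0(\mathcal{F})$ rather than just your embedding---but the embedding already suffices for the finiteness claim.
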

\begin{proof}
The kernel $\mathscr I$ of $\Psi$ is the ideal sheaf defining the curve $\mathcal V = P({\C}).$ The short exact sequence 
$$
\begin{tikzcd}
  0 \arrow[r] & \mathscr{I} \arrow[hookrightarrow]{r} & \mathcal O_{\C^n}\arrow{r}{\psi}& \mathfrak{Im}\Psi\ar{r} & 0
\end{tikzcd}
$$
implies the coherence of the image sheaf $\mathfrak{Im}\Psi$. In view of the remark I.1.5 of \cite[page 48]{GRtss}, the cokernel $\mathscr S$ of $\Psi$
is zero at every point $y \in {\C}^n$ with ${\rm card} (P^{-1}(y)) =1$. Hence $\mathscr S$ is a coherent analytic sheaf, supported by finitely many
points of ${\C}^n$, that is
$$ \dim {\mathscr S}({\C}^n) < \infty.$$
The long exact sequence of cohomology induced from the short exact sequence 
$$
\begin{tikzcd}
  0 \arrow[r] & \mathfrak{Im}\Psi \arrow[hookrightarrow]{r} & P_* {\mathcal O}_{\C}\arrow{r}{q}& \mathscr S\ar{r} & 0,
\end{tikzcd}
$$
where $q$ is the quotient morphism, implies $H^1(\C^n, \mathfrak{Im}\Psi) =0.$ Hence, from equation \ref{DI} the desired finiteness follows.
\end{proof}

Returning to the polynomial density in Lebesgue space, we carry the data from $\VV$ to the parameter space, and there we invoke Thomson's Theorem.
More precisely, let $\mu$ be a positive Borel measure supported by a compact subset $K$ of $\VV$. Since we are seeking non-trivial bounded point evaluations, we assume that $\mu$ doers not have point masses, that is $\mu(\{ y \}) = 0$ for all $y \in K$.
The pre-image set $L = P^{-1}(K)$ is compact since the map $P$ is finite.

Let $A \subset K$ denote the finite subset of points $y \in K$ with the cardinality of $P^{-1}(\{y\})$ bigger than one. The singular points of $\VV$ are included in $A$.
Let $B = P^{-1} (A)$, also a finite subset of $L$. Denote $K' = K \setminus A$ and $L' = L \setminus B$. To the extent that the restriction map
$$ P' = P|_{L'} : L' \longrightarrow K'$$ is bijective. Finally, let $\mu' = \chi_{K'} \mu$ denote the restriction of the measure $\mu$ to $K'$, and take the push-forward measure $\nu = (P'^{-1})_\ast \mu'$. The positive measure $\nu$ is supported by $\VV$ and does not possess atoms.

Since the measure $\mu$ does not carry point masses, $L^2(\mu) = L^2(\mu')$ isometrically, and consequently $P^2(\mu) = P^2(\mu')$. Let $\phi$ be a continuous function on $\C^n$. Then
$$ \int_K \phi  d\mu =  \int_{K'} \phi d\mu' = \int \phi \circ P d\nu.$$
In other terms the pull-back map
$$ P^\ast : L^2(\mu) \longrightarrow L^2(\nu)$$
is isometric. Due to the local structure of an algebraic curve, continuous functions in the ambient space separate, modulo finitely many singular points, different branches of the curve, see for instance Section 2.5 in \cite{SWP}. Hence, by passing to Borel functions, we find that the isometric map $P^\ast$ is onto, hence a unitary operator.

\begin{prop}\label{BPE} Under the assumptions above,
the space $ P^2(\mu)$ admits analytic bounded point evaluations if and only if $P^2(\nu)$ admits analytic bounded point evaluations.
\end{prop}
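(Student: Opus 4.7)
I prove the two implications by transporting the bpe through the unitary $P^\ast \colon L^2(\mu) \to L^2(\nu)$, which is isometric on polynomials: $\|p\|_{2,\mu} = \|P^\ast p\|_{2,\nu}$ for $p \in \C[z]$.

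\emph{Direction $(\Leftarrow)$.} If $|q(\zeta_0)| \leq C\|q\|_{2,\nu}$ holds for $q \in \C[\zeta]$ and $\zeta_0$ in an open set $U \subseteq \C$, specializing to $q = P^\ast p$ gives $|p(P(\zeta_0))| \leq C\|p\|_{2,\mu}$. After shrinking $U$ to avoid the finite set of points where $P$ fails to be a local biholomorphism onto the smooth locus of $\VV$, the image $P(U)$ is open in $\VV$ and inherits abpe for $\mu$ with the same constant.

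\emph{Direction $(\Rightarrow)$.} Suppose $\mu$ has abpe on an open $W \subseteq \VV$, chosen inside the smooth locus, and set $U_0 := P^{-1}(W)$. The transported estimate $|P^\ast p(\zeta_0)| \leq C\|P^\ast p\|_{2,\nu}$ holds for $\zeta_0 \in U_0$ and $p \in \C[z]$; I must extend this bound from $A := P^\ast \C[z]$ to $B := \C[\zeta]$. Since $P$ is a proper birational polynomial parametrization, the same considerations as in Lemma \ref{finite} (equivalently, the fact that the reduced curve $\VV$ has only finitely many singular points, each of finite $\delta$-invariant) yield $\dim_\C(B/A) < \infty$, whence the closure $E := \overline{A}^{L^2(\nu)}$ has finite codimension $k$ in $P^2(\nu)$. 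Lifting a basis of the orthogonal complement $E^\perp \subseteq P^2(\nu)$ back to polynomial representatives $f_1,\dots,f_k \in \C[\zeta]$ (possible since $\C[\zeta]$ is dense in $P^2(\nu)$ and $E^\perp$ is finite-dimensional), I obtain $F := \mathrm{span}(f_1,\dots,f_k)$ with $F \cap E = \{0\}$ and a topological direct sum $P^2(\nu) = E \oplus F$. Any $q \in \C[\zeta]$ then decomposes uniquely as $q = e + f$ with $e \in \C[\zeta] \cap E$, $f \in F$, and with continuous projections $\|e\|_{2,\nu},\|f\|_{2,\nu} \leq C_0\|q\|_{2,\nu}$.

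The $f$-part is harmless: finite-dimensionality of $F$ gives $|f(\zeta_0)| \leq M(\zeta_0)\|f\|_{2,\nu}$ with $M$ locally bounded. The crux, and the main obstacle, is bounding $|e(\zeta_0)|$: the polynomial $e$ may lie in $E \setminus A$, so the transported bpe does not apply to $e$ directly. I overcome this via the conductor ideal $\mathfrak{c} := \{a \in A : aB \subseteq A\}$. Since $P$ is birational, $\mathrm{Frac}(A) = \mathrm{Frac}(B) = \C(\zeta)$, so $B/A$ is a finitely generated torsion $A$-module, which forces $\mathfrak{c} \neq 0$. Choosing $g \in \mathfrak{c} \setminus \{0\}$, the product $ge$ lies in $A$, i.e.\ $ge = P^\ast p_e$ for some $p_e \in \C[z]$, and the bpe applied to $p_e$ gives
\[
|g(\zeta_0)|\,|e(\zeta_0)| \;=\; |P^\ast p_e(\zeta_0)| \;\leq\; C\|ge\|_{2,\nu} \;\leq\; C\|g\|_{L^\infty(\mathrm{supp}\,\nu)}\|e\|_{2,\nu}.
\]
Dividing by $|g(\zeta_0)|$ on the open set $U_0 \setminus Z(g)$ (the zero set $Z(g)$ of the polynomial $g$ is finite) and combining with the $f$-bound produces $|q(\zeta_0)| \leq C'(\zeta_0)\|q\|_{2,\nu}$ with $C'$ locally bounded on $U_0 \setminus Z(g)$; restricting to a relatively compact open sub-neighborhood yields abpe for $\nu$ with a uniform constant.
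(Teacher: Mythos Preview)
Your argument is correct; the $(\Leftarrow)$ direction is exactly the paper's. For $(\Rightarrow)$ you take a genuinely different route. The paper works with entire functions and simply asserts that, since $P^\ast\mathcal{O}(\C^n)+W=\mathcal{O}(\C)$ with $\dim W<\infty$ and $\alpha$ is a bounded point evaluation for each summand, it is one for the sum; the delicate point---that the projection of a polynomial onto the first summand is controlled in $L^2(\nu)$-norm, or equivalently that the continuous extension of the evaluation functional from $P^\ast\mathcal{O}(\C^n)$ to its closure still returns the honest value on entire functions---is left implicit. You confront this issue head-on: after taking the closure $E=\overline{P^\ast\C[z]}$ and a polynomial complement $F$, you observe that the $E$-component $e$ of a polynomial need not lie in $P^\ast\C[z]$, and you repair this with the conductor ideal, multiplying by a fixed $g\in\mathfrak{c}\setminus\{0\}$ to force $ge\in P^\ast\C[z]$ and then dividing by $|g(\zeta_0)|$ away from the finite zero set. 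This is a clean algebraic device that the paper does not invoke, and it makes the bound fully explicit. The cost is that you must excise $Z(g)$ from the open set, but since abpe only requires some nonempty open set, this is harmless; the paper's more analytic phrasing avoids introducing $g$ but is correspondingly terser about why the decomposition is controlled.
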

\begin{proof}
Assume that $\alpha \in L$ is a analytic bounded point evaluation with respect to $P^2(\nu)$. Let $P(\alpha) = \beta$. For every $p\in \C[\bl z]$ one finds
$$
|p(\beta)| = |p\circ P (\alpha)|\leq C\|p\circ P\|_{2,\nu} = C\|p\|_{2,\mu}
$$
where $C>0$ is a universal constant. Therefore $ P^2(\mu)$ admits analytic point evaluations filling a neighborhood of $\beta$. 

Conversely, assume that $ P^2(\mu)$ admits bounded point evaluations (with the same bound $C$) at every point of an open subset $U$ of $\VV$. While the map $P : \C \longrightarrow \VV$ may not be surjective,
it avoids only finitely many points of $\VV$ in its range, cf. Theorem 2.2.43 in \cite{SWP}. Choose a point $\beta \in U$ which possesses a pre-image, that is
 $\alpha \in \C$ such that $P(\alpha) = \beta$. We claim that at $\alpha$, is a bounded point evaluation for  $P^2(\nu)$. The algebra of entire functions ${\mathcal O}(\C^n)$ is dense in $P^2(\mu)$, and we have deduced from Grauert's finiteness theorem that $P^\ast {\mathcal O}(\C^n)$ is a finite codimensional subspace of ${\mathcal O}(\C)$. Hence there exists a finite dimensional space of entire functions $W \subset {\mathcal O}(\C)$ with the property that 
 $P^\ast {\mathcal O}(\C^n) + W$ is a dense subspace of $P^2(\nu)$. Since $\alpha$ is a bounded point evaluation for both $P^\ast {\mathcal O}(\C^n)$ and $W$, and
 $\dim W<\infty$, we infer that $\alpha$ is a bounded point evaluation for the entire space $P^2(\nu)$, with a locally bounded constant $C(\alpha) = \sup\frac{p(\alpha)}{\| p \|_{2,\nu}}$.
\end{proof}

\begin{thm} \label{poly}Let $\VV \subset \C^n$ be a polynomial curve and let $\mu$ be a positive Borel measure supported by a compact subset of $\VV$. Assume that $\mu$ does not have point masses and $ P^2(\mu) \neq L^2(\mu)$. Then, and only then, there exist analytic bounded point evaluations for $ P^2(\mu)$.
\end{thm}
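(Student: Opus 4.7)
My plan is to transfer the question from $\VV$ to the parameter space $\C$ via the unitary $P^\ast : L^2(\mu) \to L^2(\nu)$ constructed in the discussion preceding Proposition \ref{BPE}, and then appeal to Thomson's theorem in the plane applied to $\nu$. Writing $R = P^\ast \C[z] = \C[p_1(\zeta), \ldots, p_n(\zeta)] \subset \C[\zeta]$, one has $P^\ast P^2(\mu) = \overline{R}$, the closure of $R$ in $L^2(\nu)$; in particular $P^2(\mu) = L^2(\mu)$ if and only if $\overline{R} = L^2(\nu)$. Combining Proposition \ref{BPE} with Thomson's theorem applied to the compactly supported, atomless measure $\nu$ on $\C$ reduces the statement to the equivalence
\[
P^2(\mu) \neq L^2(\mu) \iff P^2(\nu) \neq L^2(\nu).
\]

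One direction is immediate: if $P^2(\nu) \neq L^2(\nu)$, then $\overline{R} \subset P^2(\nu)$ forces $\overline{R} \neq L^2(\nu)$, i.e., $P^2(\mu) \neq L^2(\mu)$. For the reverse direction I argue by contrapositive: assuming $P^2(\nu) = L^2(\nu)$, I must show $\overline{R} = L^2(\nu)$. Lemma \ref{finite} produces a finite-dimensional $W \subset \mathcal{O}(\C)$ with $\mathcal{O}(\C) = P^\ast \mathcal{O}(\C^n) + W$, and since polynomials are dense in entire functions uniformly on compact sets and hence in $L^2(\nu)$, passing to $L^2(\nu)$-closures gives $P^2(\nu) = \overline{R} + W$. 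Therefore $\overline{R}$ has finite codimension in $P^2(\nu)$, and under the hypothesis $P^2(\nu) = L^2(\nu)$ also in $L^2(\nu)$, so $V := \overline{R}^{\perp}$ (orthogonal complement in $L^2(\nu)$) is a finite-dimensional subspace.

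The crux is then a spectral argument on $V$. Since $R$ is a subalgebra and each $p_i$ is bounded on the compact support of $\nu$, the closed subspace $\overline{R}$ is invariant under each multiplication operator $M_{p_i}$, so $V$ is invariant under the commuting family of adjoints $\{M_{\bar p_i}\}_{i=1}^n$. On the finite-dimensional $V$ this family admits a joint eigenvector $f \in V \setminus \{0\}$ with $\bar p_i(\zeta) f(\zeta) = \lambda_i f(\zeta)$ $\nu$-a.e., forcing $f$ to vanish off the fiber $P^{-1}(\bar\lambda_1, \ldots, \bar\lambda_n) \subset \C$; this fiber is finite because at least one $p_i$ is non-constant. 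The atomlessness of $\nu$ then yields $f = 0$ in $L^2(\nu)$, a contradiction. Hence $V = 0$ and $\overline{R} = L^2(\nu)$, completing the argument. I expect this eigenvector-on-a-finite-fiber step to be the essential new ingredient: it is where Lemma \ref{finite} (finiteness of the direct image) and the atomlessness of $\mu$ jointly close the potential gap between $L^2(\mu)$-density of polynomials and $L^2(\nu)$-density of $\C[\zeta]$.
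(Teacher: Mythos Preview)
Your proof is correct and follows the same overall architecture as the paper: transport to the parameter line via the unitary $P^\ast$, use Lemma~\ref{finite} to see that $P^\ast P^2(\mu)=\overline{R}$ has finite codimension in $P^2(\nu)$, then a finite-dimensional spectral argument plus atomlessness shows $P^2(\nu)=L^2(\nu)\Rightarrow \overline{R}=L^2(\nu)$, after which Thomson's theorem and Proposition~\ref{BPE} finish.

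The one noteworthy difference is in the spectral step. The paper works on the $L^2(\mu)$ side with the block decomposition
\[
N_i=\begin{bmatrix} M_i & S_i\\ 0 & T_i\end{bmatrix}
\]
of the normal multipliers $N_i=M_{z_i}$, and uses the commutator identities $[M_i^\ast,M_i]=S_iS_i^\ast$, $S_i^\ast S_i=[T_i,T_i^\ast]$ together with a trace argument to force $S_i=0$; only then does a common eigenvector of the $T_i$ become an eigenvector of $N_i$ itself and hence a point mass. You bypass this entirely by passing to the adjoints: since $\overline{R}$ is $M_{p_i}$-invariant, $V=\overline{R}^\perp$ is genuinely invariant (not merely semi-invariant) under the commuting family $M_{p_i}^\ast=M_{\bar p_i}$, so a joint eigenvector of the restrictions is automatically an eigenvector of the full operators $M_{\bar p_i}$ on $L^2(\nu)$, and the support conclusion follows immediately. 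In effect you are diagonalizing the $T_i^\ast$ rather than the $T_i$, which makes the $S_i=0$ step unnecessary. Both routes are valid; yours is a clean shortcut.
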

\begin{proof}
Choose, as before in this section, a normal, polynomial parametrization $P: \C \longrightarrow \C^n$ of the curve $\VV$. Let $\nu$ denote the pull-back measure of $\mu$ on $\C$. We claim $P^2(\nu) \neq L^2(\nu)$. 

Suppose by contradiction $P^2(\nu) = L^2(\nu)$. Lemma \ref{finite} implies that $P^\ast P^2(\mu)$ is a finite codimenion subspace of  $L^2(\nu)$. But the pull-back map $P^\ast$ is unitary at the level of $L^2$ spaces. Hence $P^2(\mu)$ is a finite codimension subspace of $L^2(\mu)$.

The multiplication operator $M_i$ on $P^2(\mu)$ by the coordinate function $z_i$ is subnormal, for every $i, \ 1 \leq i \leq n.$ The corresponding normal extension $N_i$, is represented by the multiplication by  $z_i$ on $L^2(\mu)$. With respect to the decomposition $L^2(\mu) = P^2(\mu)\oplus P^2(\mu)^\perp$, we can write $N_i$ in $2\times 2$ blocks:
$$
N_i = 
\begin{bmatrix}
M_i & S_i\\
0 & T_i
\end{bmatrix}
$$
with both $S_i$ and $T_i$ finite rank operators, $1\leq i\leq n$. The normality block operator equation of these extensions yields:
$$
[M_i^*, M_i] = S_i S_i^* \mathrm{~and~} S_i^*S_i = [T_i, T_i^*].
$$
Since $S_i$ is a finite rank operator and trace of $S_i^*S_i$ vanishes, we find $S_i = 0$ for all $i, 1\leq i\leq n$. This in turn shows that both $M_{z_i}$ and $T_i$'s are normal operators. By assumption $ P^2(\mu) \neq L^2(\mu)$, that is the block carrying the commuting normal matrices $T_i$ is non-trivial. Then a common eigenvector of the $T_i$'s exists, implying the existence of a point mass for the measure $\mu$. A contradiction.

According to Thomson's theorem \cite{T}, the space $P^2(\nu)$ admits a non-empty open set of bounded point evaluations. Hence by Proposition, \ref{BPE},  $ P^2(\mu)$ has analytic bounded point evaluations. For the only then part, we note that if  $ P^2(\mu)$ admits bounded point evaluations, then $P^2(\mu) \neq L^2(\mu)$ as other wise the unitarity of the pull back map implies $P^2(\nu) = L^2(\nu)$ which is a contradiction to Thomson's theorem via Proposition, \ref{BPE}. This completes the proof.
\end{proof}

\section{Rational curves} The general case of rational curves is not much different. This time we change the base via a rational map, and invoke a generalization of Thomson's theorem proved by Brennan \cite{Brennan-2008}. We state the main result and indicate the very similar deduction molded on the proof detailed in the previous section.

\begin{thm}\label{rat} Let $\VV$ be a rational curve in $\C^n$ and let $\mu$ be a positive Borel measure without point masses, supported by a compact subset of $\VV$. Then
$P^2(\mu) \neq L^2(\mu)$ if and only if there are analytic $P^2(\mu)$-bounded point evaluations.
\end{thm}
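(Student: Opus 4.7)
The plan is to imitate the proof of Theorem \ref{poly} step-by-step, replacing the polynomial parametrization by a proper rational one and Thomson's theorem by Brennan's. First I would invoke the normal form for rational curves: up to reparametrization there is a proper rational map $P : \C \dashrightarrow \VV$, regular on the complement of a finite pole set $F \subset \C$, bijective off a further finite exceptional set, and omitting only finitely many points of $\VV$. The compactness of $K \subset \VV$ together with the properness of $P$ forces $L := P^{-1}(K)$ to be a compact subset of $\C \setminus F$; in particular $L$ lies at positive distance from the poles of $P$, which is the feature that makes the rational case look ``polynomial'' in a neighborhood of the support.

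Next I would transport the data to the $\zeta$-plane. Removing the finite set $A \subset K$ of non-injectivity and its preimage $B \subset L$, the restricted map $P' : L \setminus B \to K \setminus A$ is a homeomorphism, and since $\mu$ has no atoms the push-forward $\nu = (P'^{-1})_\ast(\chi_{K\setminus A}\mu)$ gives an atom-free measure on $L$ and a unitary pull-back $P^\ast : L^2(\mu) \to L^2(\nu)$, by the same ``Borel branch-separation'' argument used before. Because $F \cap L = \emptyset$, each pulled-back coordinate $P^\ast z_i$ is a rational function with poles off $L$, so $P^\ast \C[z]$ sits inside $R(L)$ and hence $P^\ast P^2(\mu) \subset R^2(\nu)$, where $R^2(\nu)$ is the closure of $R(L)$ in $L^2(\nu)$.

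The key technical step is the rational analogue of Lemma \ref{finite}: I would apply Grauert's direct-image theorem to $P$ viewed as a finite map from $\C \setminus F$ into $\C^n$ to conclude that $P^\ast \mathcal O(\C^n)$ has finite codimension inside $\mathcal O(\C \setminus F)$, and since $\mathcal O(\C \setminus F) \supset R(L)$ is dense in $R^2(\nu)$, this yields that $P^\ast P^2(\mu)$ has finite codimension in $R^2(\nu)$. Then I would run verbatim the subnormality argument from the proof of Theorem \ref{poly}: if $R^2(\nu) = L^2(\nu)$, the unitarity of $P^\ast$ would force $P^2(\mu)$ to be finite codimensional in $L^2(\mu)$, and the $2\times 2$ block decomposition of the normal extensions of the subnormal coordinate operators (giving $[T_i, T_i^\ast]$ of vanishing trace, hence $S_i = 0$) would produce a common eigenvector for a family of commuting normals, contradicting the hypothesis that $\mu$ has no point masses. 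Thus $R^2(\nu) \neq L^2(\nu)$.

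At this point I would invoke Brennan's theorem \cite{Brennan-2008} to deduce the existence of a nonempty open set of analytic bounded point evaluations for $R^2(\nu)$ inside $\C \setminus L$, and then transfer them back to $\VV$ via the same estimate used in Proposition \ref{BPE}: for $\alpha$ in this open set lying in the image of injective behaviour of $P$, the value $\beta = P(\alpha)$ is an analytic bounded point evaluation for $P^2(\mu)$, since $|p(\beta)| = |P^\ast p(\alpha)| \leq C \| P^\ast p \|_{2,\nu} = C \| p \|_{2,\mu}$. The converse direction is the same easy contradiction as in Theorem \ref{poly}. The only step where one must tread carefully is the finiteness/density argument, because the target space on the parameter side is now $R^2(\nu)$ rather than $P^2(\nu)$; I expect this to be the main obstacle, but it is exactly what Brennan's rational version of Thomson's theorem was designed to accommodate.
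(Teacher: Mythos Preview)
Your strategy is the same as the paper's, but there is a concrete technical gap in how you set up the parameter-side domain. You apply Grauert's direct image theorem to $P:\C\setminus F\to\C^n$ and then work with $R(L)$; the paper instead fixes a large ball $B(0,\rho)\supset\operatorname{supp}\mu$, sets $U=R^{-1}B(0,\rho)$, and works with $R^2(U,\nu)$, the $L^2(\nu)$-closure of rational functions with poles outside $U$ (equivalently of $\mathcal O(U)$, by Runge). This is not cosmetic. First, $P:\C\setminus F\to\C^n$ need not be proper: if the extension $\tilde P:\mathbb P^1\to\overline{\VV}$ sends $\infty$ to an affine point of $\VV$, then preimages of compacta are unbounded, so your compactness claim for $L=P^{-1}(K)$ and your invocation of Grauert both fail in general. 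Restricting to the bounded set $U$ makes $R:U\to B(0,\rho)$ a genuine finite proper map and yields $\dim\mathcal O(U)/R^\ast\mathcal O(B(0,\rho))<\infty$, which is the rational analogue of Lemma~\ref{finite} you want. Second, Brennan's theorem \cite{Brennan-2008} is stated for $R^2(U,\nu)$ with $U$ of finite connectivity and each component of $\C\setminus U$ of \emph{positive diameter}; the paper checks that $U=R^{-1}B(0,\rho)$ has piecewise smooth boundary and satisfies this, whereas your $R^2(\nu)$ (closure of $R(L)$) neither matches Brennan's hypothesis nor coincides with the paper's space. Incidentally, the containment ``$\mathcal O(\C\setminus F)\supset R(L)$'' you wrote is false; the inclusion goes the other way after taking uniform closure.

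Once you replace $\C\setminus F$ and $L$ by $U$ as above, everything you outlined (the unitary pull-back, the finite-codimension step, the block subnormality argument producing a point mass, the BPE transfer \`a la Proposition~\ref{BPE}, and the appeal to Brennan) goes through exactly as in the paper. So the fix is a single structural change of domain, not a new idea.
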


\begin{proof} Let $R = (r_1, r_2, \ldots, r_n)$ be an $n$-tuple of rational functions which properly parametrizes the affine curve $\VV$. That is, denoting by $S \subset \C$ the poles of $R$, the holomorphic map
$$ R : \C \setminus S \longrightarrow \VV $$
is one to one, except finitely many points, and it covers $\VV$ except finitely many points. We refer to Section 4.4.2 in \cite{SWP} for terminology and basic results. Let $\rho$ denote a sufficiently large radius, so that the support of the measure $\mu$ is contained in the ball $B(0,\rho)$. The pull-back 
$$ U = R^{-1} B(0,\rho)$$
is an open subset of $\C$, of finite connectivity, with piece-wise smooth boundary. In particular we can assume that every connected component of the complement of $U$ has positive diameter.

The restricted analytic map
$$ R : U \longrightarrow B(0,\rho)$$
has finite fibres, hence it is proper. Grauert's finiteness theorem implies that the direct image sheaf $R_\ast {\mathcal O}_U$ is coherent and
$$ R_\ast  {\mathcal O}_U (B(0,\rho)) = {\mathcal O}(U).$$
See again Theorem I.1.5 in \cite{GRtss}. As in the previous section, the coherence of $R_\ast {\mathcal O}_U$ and the injectivity of $R$ modulo a finite set
imply that
$$ \dim {\mathcal O}(U) / R^\ast {\mathcal O}(B(0,\rho)) < \infty.$$
Next we define the pull-back measure $\nu$ on $U$ as in the previous proof:
$$ \int \phi\,  d\mu = \int \phi \circ R \,d\nu,$$
for every continuous function $\phi : B(0,\rho) \longrightarrow \C$. 

Let $R^2(U,\nu)$ denote the closure in $L^2(\nu)$, of rational functions with poles on
the complement of $U$. Runge's approximation theorem implies that $R^2(U,\nu)$ is also the closure of the algebra ${\mathcal O}(U)$ in $L^2(\nu)$.
The counterpart of Proposition \ref{BPE} has the same proof:
\bigskip

{\it There exist analytic bounded point evaluations with respect to $P^2(\mu)$ if and only if there exists analytic bounded point evaluations with respect to  $R^2(U,\nu)$.}
\bigskip

Theorem 1 in \cite{Brennan-2008} asserts, under the positive diameter assumption of the connected components of $U$ and the lack of point masses, that $R^2(U,\nu) \neq L^2(\nu)$ if and only if there exist analytic bounded point evaluations with respect to $R^2(U,\nu)$. 
\end{proof}

\begin{cor}
A commuting subnormal tuple with Taylor's joint spectrum contained in a rational curve admits joint invariant subspaces.
\end{cor}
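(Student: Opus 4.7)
The plan is to combine the standard $P^2(\mu)$-model for cyclic subnormal tuples with Theorem \ref{rat}, after a case analysis on the point masses of the scalar spectral measure. Throughout, write $M=(M_1,\dots,M_n)$ for the subnormal tuple on a Hilbert space $H$ with $\sigma_T(M)\subset\VV$, and assume $\dim H\geq 2$ (otherwise the statement is vacuous).

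First I would reduce to the cyclic case. If no single vector is cyclic for $M$, then for any $0\neq\xi\in H$ the closed subspace $\overline{\{p(M)\xi:p\in\C[z]\}}$ is a proper joint invariant subspace, and the proof is complete. So assume $\xi$ is cyclic for $M$. Let $N=(N_1,\dots,N_n)$ denote the minimal normal extension of $M$. The standard inclusion $\sigma_T(N)\subset\sigma_T(M)\subset\VV$ for commuting subnormal tuples, together with star-cyclicity of $\xi$ for $N$, produces via the spectral theorem a compactly supported positive Borel measure $\mu$ on $\VV$ and a unitary identification under which $N_i$ becomes multiplication by $z_i$ on $L^2(\mu)$ and $H$ corresponds to $P^2(\mu)$.

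Next one splits into three cases. If $\mu$ has a point mass at some $\lambda\in\VV$, then $\sqrt{\mu(\{\lambda\})}\,|p(\lambda)|\leq\|p\|_{2,\mu}$ shows that the polynomial evaluation at $\lambda$ extends by continuity to a nonzero bounded linear functional $L_\lambda$ on $P^2(\mu)$; the relation $L_\lambda(z_i f)=\lambda_i L_\lambda(f)$, verified first on polynomials and then by density, implies that $\ker L_\lambda$ is a proper closed joint invariant subspace. If $\mu$ has no atoms and $P^2(\mu)=L^2(\mu)$, then each $M_i$ is already normal, and since $\operatorname{supp}\mu$ necessarily contains infinitely many points, any non-trivial spectral projection of this commuting normal tuple furnishes a proper joint reducing subspace. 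Finally, if $\mu$ has no atoms and $P^2(\mu)\neq L^2(\mu)$, Theorem \ref{rat} yields an analytic bounded point evaluation $\lambda$ for $P^2(\mu)$, and the kernel of the corresponding functional is a proper closed joint invariant subspace by the same computation as in the atomic case.

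The main technical point is the containment $\sigma_T(N)\subset\sigma_T(M)\subset\VV$, which places $\mu$ on the rational curve and thereby makes Theorem \ref{rat} directly applicable. This inclusion is classical for single subnormal operators, and in the tuple setting one invokes it as a standard fact from the theory of commuting subnormal tuples (see \cite{C}); the remaining verifications are routine.
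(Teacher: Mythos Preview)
Your argument is correct and follows essentially the same route as the paper: reduce to the cyclic model $M_z$ on $P^2(\mu)$ (the paper cites \cite{CS} for this) and then invoke Theorem~\ref{rat} to produce a bounded point evaluation $\lambda$, whose kernel---equivalently, the paper's $\overline{\sum(z_i-\lambda_i)P^2(\mu)}$, the orthocomplement of $\bigcap\ker(M_{z_i}-\lambda_i)^*$---is the invariant subspace. Your treatment is in fact more careful than the paper's brief proof: you handle explicitly the atomic case and the case $P^2(\mu)=L^2(\mu)$, both of which the paper leaves implicit, and you flag the spectral inclusion $\sigma_T(N)\subset\sigma_T(M)$ needed to place $\mu$ on $\VV$ (for tuples this is due to Putinar rather than appearing in \cite{C}, so that citation could be sharpened).
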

\begin{proof}
For a commuting subnormal tuple with Taylor's joint spectrum contained in a rational curve, it is enough to show the same for cyclic subnormal tuple of operators, in fact, by  \cite[Remark 2.2]{CS}, it suffices to show the same for the tuple of multiplication operator $M_z = (M_{z_1}, \ldots, M_{z_n})$ by the coordinate functions on $P^2(\mu)$. From Theorem \ref{rat}, it follows that if $\beta = (\beta_1, \ldots, \beta_n)$ is a bounded point evaluation point, then $\cap\ker (M_{z_i} - \beta_i)^*$ is non-trivial and hence the closure of $\sum (z_i - \beta_i)P^2(\mu)$ is a non-trivial invariant subspace for $M_z$.
\end{proof}

\section{Concluding remarks}

\subsection{Analytic coordinate charts} The above proof carries verbatim on an algebraic curve $\VV$, in case the measure $\mu$ is compactly supported in a coordinate chart $V \subset \VV$. More specifically, assuming the existence of a bi-holomorphic map
$$ f : U \longrightarrow V$$
where $U \subset \C$ is an open set with complement $\C \setminus U$ consisting of finitely many connected sets of positive diameter.

\subsection{Resolution of singularities} Uniform approximation by analytic functions defined on an open Riemann surface is much better understood, with definitive results generalizing Runge's Theorem or even Mergelyan's Theorem, see for instance  \cite{Bishop,Scheinberg}.
 
 Let $\VV$ be an affine algebraic curve. The well known desingularization procedure provides a birational transform 
 $p: X \longrightarrow \VV$, where $X$ is an open Riemann surface, see for instance \cite{Kollar}. Our main proof carries without major adaptation to the map $p$, raising the following intriguing question.
 \bigskip
 
 {\noindent \bf Open Problem.} {\it Let $X$ be an open Riemann surface of finite genus, and let $\mu$ be a positive Borel measure on $X$ without point masses. 
 Let $U \subset X$ be an open, relatively compact subset of $X$, with finitely many components of $X \setminus U$, none reduced to a point.
 Assume the closed support of the measure $\mu$ is contained in $U$. 
 Then analytic functions ${\mathcal O}(U)$ are dense in $L^2(\mu)$ if and only if there are no corresponding bounded analytic point evaluations.}
\bigskip
 
The precautions in stating the above question with specific topological constraints on the open set $U$ are resonant to Brennan's main result 
proved in the complex plane $X = \C$, \cite{Brennan-2008}. A simple application of the much stronger, uniform approximation results, can be formulated as follows.

\begin{rem}
Let $X$ be an open Riemann surface. Assume that the measure $\mu$ is supported by a piecewise smooth curve $\Gamma \subset X$ with the property that the complement
$ X \setminus \Gamma$ is connected. Then $P^2(\mu) = L^2(\mu)$. 
\end{rem}

Simply because ${\mathcal O}(X)$ is dense in the space of continuous functions $C(\Gamma)$, hence in $L^2(\mu)$, according to Scheinberg's master theorem \cite{Scheinberg}. The situation of an elliptic curve $X$, with $\Gamma$ as one of the generating cycles of homology of the projective completion of $X$, is notable in this respect.

\subsection{Generic linear projections} Let $\VV \subset \C^n$ be an affine algebraic curve and let $\mu$ be a positive Borel measure supported by $\VV$.
If there are $L^2(\mu)$-bounded point evaluations for polynomials, then any generic linear map 
$$ L : \C^n \longrightarrow \C$$
will detect them. Indeed, Thomson's Theorem applied to the measure $L_\ast \mu$ on $\C$ and the fact that $L$ is an open map, except the exceptional case when a component of $\VV$ is contained in the fibre of $L$, imply
$$ |p(L(\lambda)| \leq C \| p \circ L \|_{2, \mu} = C \| p \|_{2, L_\ast \mu}.$$
A simple example shows that the converse does not hold.

Indeed, let $\VV$ be the curve in $\C^2$ given by the equation $z_1 z_2 = 1$, and let $\mu$ be the positive measure
$$ \int p(z_1, z_2) d\mu = \int_{-\pi}^\pi p(e^{it},e^{-it}) dt, \ \ p \in \C[z_1,z_2].$$
If $(\lambda, \frac{1}{\lambda})$ were an analytic bounded point for this measure, by taking polynomial in $z_1$, or $z_2$, and say
$|\lambda|>1$, one would
obtain the impossible estimate
$$ |q(\lambda)|^2 \leq C \int_{-\pi}^\pi |q(e^{it})|^2 dt, \ \ q \in \C[z].$$
As a matter of fact, one finds easily the isometric identification $P^2(\mu) \equiv L^2(\T, d\theta)$, where $\T$ is the unit circle. The  pull-back map $\phi^*: f\mapsto f\circ \phi$ from $L^2(\mu)$ to $L^2(\mathbb T, d\theta)$, where $\phi: z\mapsto (z, \frac{1}{z})$ from $\mathbb T$ into $\VV$,  gives the isometric identification, since $\mu = \phi_*(d\theta)$ and $\phi^*\mathbb C[z_1, z_2] = \mathbb C[z] + \C[\frac{1}{z}]$ is dense in $L^2(\mathbb T, d\theta)$. Hence, the space
$P^2(\mu)$ does not carry bounded point evaluations.

On the other hand, considering the linear map $L(z_1,z_2) = z_1 + a z_2$, one finds that for all complex numbers $a$ of modulus different than $1$,
the measure $L_\ast \mu$ is integration along an ellipse, against a positive weight times arc length. Hence the space $P^2(L_\ast \mu)$ does admit bounded analytic point evaluations. In particular $P^2(L_\ast \mu) = H^2(\mathbb D)$ for $a=0$.

\begin{thebibliography}{99}

\itemsep=\smallskipamount

\bibitem{Akh} Akhiezer, N. I. \emph{Theory of Approximation}, Frederick Ungar Publishing Co., New York, 1956.


\bibitem{Bishop} Bishop, E. \emph{Subalgebras of functions on a Riemann surface.} Pacific J. Math. 8 (1958), 29--50.

\bibitem{Brennan-2005} Brennan, J. E. \emph{ Thomson's theorem on mean-square polynomial approximation.} (Russian) ; translated from Algebra i Analiz 17 (2005), no. 2, 1--32; St. Petersburg Math. J. 17 (2006), no. 2, 217--238.

\bibitem{Brennan-2008} Brennan, J. E. \emph{The structure of certain spaces of analytic functions.} Comput. Methods Funct. Theory 8 (2008), no. 1-2, 625--640.

\bibitem{C} Conway,  J. B. \emph{ The theory of subnormal operators}, Mathematical Surveys and Monographs, 36, American Mathematical Society, 1991.

\bibitem{CY} Conway, J. B. and Yang Liming. \emph{Approximation in the mean by rational functions}, arXiv: 1904.06446v6

\bibitem{CS} R. E. Curto, and N. Salinas, \emph{Spectral properties of cyclic subnormal m-tuples}, Amer. J. Math. 107 (1985), no. 1, 113–138.

\bibitem{GRtss} Grauert, H. and Remmert, R. \emph{Theory of Stein spaces}, Translated by Alan Huckleberry, Classics in Mathematics. Springer-Verlag, Berlin, 2004.

\bibitem{Kollar} Koll\'ar, J. \emph{Lectures on resolution of singularities.} Annals of Mathematics Studies, 166. Princeton University Press, Princeton, NJ, 2007. vi+208 pp.

\bibitem{K} Krein, M. G. \emph{On a generalization of some investigations of G. Szegö, V. Smirnoff and A. Kolmogoroff}, C. R. (Doklady) Acad. Sci. URSS (N.S.) 46, (1945). 91–94.



\bibitem{Scheinberg} Scheinberg, S. \emph{Uniform approximation by functions analytic on a Riemann surface.} Ann. of Math. (2) 108 (1978), no. 2, 257--298.

\bibitem{SWP} Sendra, J. R. , Winkler,  F. and Pérez-Díaz, S. \emph{Rational algebraic curves.
A computer algebra approach}, Algorithms and Computation in Mathematics, 22. Springer, Berlin, 2008.

\bibitem{T} Thomson, J. E. \emph{Approximation in the mean by polynomials}, Ann. of Math. (2) 133 (1991), no. 3, 477–507. 

\bibitem{Yang} Yang, Liming. \emph{Reproducing kernel of the space $R^t(K,\mu)$.} Operator theory, operator algebras and their interactions with geometry and topology, 521--534, Oper. Theory Adv. Appl., 278, Birkh\"user/Springer, Cham, 2020.


\end {thebibliography}
\end{document}